\date{\today}
\theoremstyle{plain}
\newtheorem{theorem}{Theorem}[section]
\newtheorem{lemma}[theorem]{Lemma}
\newtheorem{proposition}[theorem]{Proposition}
\newtheorem{observation}[theorem]{Observation}
\title{Singularity of random symmetric matrices -- simple proof}
\author{Asaf Ferber \thanks{Department of Mathematics, University of California, Irvine. Email: {\tt asaff@uci.edu}. Research is partially supported by an NSF grant DMS-1954395.}}
\date{\today}
\begin{document}

\maketitle

\begin{abstract}
    In this paper we give a simple, short, and self-contained proof for a non-trivial upper bound on the probability that a random $\pm 1$ symmetric matrix is singular.   
\end{abstract}

\section{Introduction}

A widely studied model of discrete random matrices is that of random \emph{symmetric} $\pm1$ matrices. That is, let $M_n$ denote an $n\times n$ symmetric $\pm1$ matrix chosen uniformly from the set of all such matrices. 

One of the most natural problems is to estimate $$p(n)=\Pr[M_n \textrm{ is singular}].$$

In this model, even proving that $p(n)=o(1)$ (this problem was posed by Weiss in the early 1990s) is quite challenging and was only settled in 2005 by Costello, Tao, and Vu \cite{costello2006random}, who showed that $$p(n) = O\left(n^{-1/8 + o(1)}\right).$$ 

In their work, they introduced and studied a quadratic variant of the Erd\H{o}s-Littlewood-Offord inequality and a useful decoupling lemma, which serve as key tools in all subsequent works on this problem. 

Following some intermediate works by Nguyen \cite{nguyen2012inverse}, Vershynin \cite{vershynin2014invertibility}, and Ferber and Jain \cite{ferber2019singularity}, the current best bound on $p(n)$ is 
$$p(n)=2^{-\omega(\sqrt{n})}$$
due to Campos, Mattos, Morris, and Morrison \cite{campos2019singularity}. Moreover, as was noted in \cite{campos2019singularity}, this bound is the best one can hope to obtain using the existing technique. 

The common belief is that $p(n)=(\frac{1}{2}+o(1))^n$, which, if true, is clearly best possible, as one can check by calculating the probability that $M_n$ has at least two identical rows/columns. Therefore, in order to make a further progress, it is required to come up with new ideas/techniques to tackle this problem. The aim of this note is to provide a proof for a non-trivial (but yet, quite weak) bound for $p(n)$ which completely avoids the difficulties from the previous approach (it might introduce new difficulties though). 

Our main theorem is the following: 

\begin{theorem} \label{main} There exists some $C>0$ for which
$p(n)=O(\frac{\log^Cn}{n^{1/2}})$.
\end{theorem}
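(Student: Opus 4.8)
The plan is to expose $M_n$ gradually and, at the first moment that invertibility could fail, to read off a \emph{linear} condition on the not-yet-revealed entries, so that only the ordinary Erd\H{o}s--Littlewood--Offord inequality is needed and the quadratic Littlewood--Offord machinery of the earlier works is sidestepped. Reveal the principal submatrices $M_1\subseteq\cdots\subseteq M_n$; for the last step write $M_n=\bigl(\begin{smallmatrix}M_{n-1}&x\\x^{\mathsf T}&m_{nn}\end{smallmatrix}\bigr)$ with $x=(M_{1n},\dots,M_{n-1,n})$ and $m_{nn}$ fresh. Instead of passing to the scalar quadratic form $m_{nn}=x^{\mathsf T}M_{n-1}^{-1}x$, split on the corank of the already-exposed block $M':=M_{n-1}$.

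The useful case is when $M'$ is \emph{degenerate} with corank exactly $1$, say $\ker M'=\operatorname{span}(u)$, where $u$ depends only on the revealed block. A two-line computation then gives: $M_n$ is singular if and only if $\langle u,x\rangle=0$ — testing $M_nv=0$ against $u$ forces $v_n\langle u,x\rangle=0$, and conversely $(u,0)^{\mathsf T}$ is a null vector of $M_n$ exactly when $\langle u,x\rangle=0$. Since $x$ is fresh and independent of $u$, Littlewood--Offord gives $\Pr[M_n\ \text{singular}\mid M']\le O(\|u\|_0^{-1/2})$, which is $O(n^{-1/2})$ unless $u$ is sparse; and the possibility that $M'$ has a sparse kernel (or corank $\ge 2$ with a sparse kernel) is removed by a crude first moment estimate, since fixing the small support $S$ of a vector of $\ker M'$ forces, for each of the $\gtrsim n$ rows of $M'$ indexed outside $S$, an \emph{independent} linear condition of probability at most $\tfrac12$, and a union bound over the $\binom{n}{|S|}$ supports is summable for $|S|\le n/\log^{C}n$. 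When $M'$ has corank $\ge 2$ with a spread kernel, an $(n-2)\times(n-2)$ principal submatrix of $M_n$ is already singular, contributing at most $p(n-2)$ to an induction on $n$; combined with exchangeability this lets one assemble the pieces, the sparsity threshold being where the $\log^{C}n$ enters.

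The remaining — and, I expect, main — obstacle is the generic case, where $M'$ is \emph{nonsingular}: then there is no $u$ to test against and, done naively, the scalar quadratic form returns. This is exactly the configuration in which the null vector of $M_n$ has no zero coordinates, i.e.\ precisely where one would classically invoke anticoncentration of a null vector and, in the symmetric model, a quadratic Littlewood--Offord inequality. The route I would try is to not expose a full complementary block at once but to continue the row-by-row exposure, arguing that the index actually witnessing a dependency may be taken to be one at which the already-revealed principal block is degenerate — so that the linear dichotomy applies — while controlling the residual "bad" event (a dependency surfacing although every exposed principal block has stayed nonsingular) by a separate argument that again uses only linear anticoncentration. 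Carrying out this last reduction without ever touching a quadratic anticoncentration estimate is the crux, and where I would expect the bulk of the work, and the essential new idea, to lie.
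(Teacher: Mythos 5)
Your proposal takes a genuinely different route from the paper, but it is not a complete proof: you explicitly flag the nonsingular-$M_{n-1}$ case as ``the crux'' and leave it open, and that case is not a technicality to be filled in --- it is precisely where the Costello--Tao--Vu framework forces the quadratic Littlewood--Offord inequality, and you offer only a heuristic (``continue the row-by-row exposure, arguing that the index witnessing a dependency may be taken to be one at which the exposed block is degenerate'') for why one might escape it. As written, the argument does not assemble into a bound on $p(n)$.

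There is also a quieter problem in the part you do sketch. In the corank~$\geq 2$ branch you bound the contribution by $p(n-2)$ and invoke ``an induction on $n$.'' But $\operatorname{corank}(M_{n-1})\geq 2$ already forces $M_n$ singular (since $\operatorname{rank}(M_n)\leq \operatorname{rank}(M_{n-1})+2$), so the event contributes its full probability, and the best you get this way is a recursion of the shape $p(n)\leq p(n-2)+O(n^{-1/2}\log^{C}n)$. That telescopes to $p(n)=O(n^{1/2}\log^{C}n)$, which is vacuous. To make a corank argument close you would need something more like the CTV random-walk-on-coranks bookkeeping, and the step of that walk out of corank~$0$ is again the quadratic estimate you are trying to avoid. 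The sparse-kernel first-moment estimate and the corank-$1$ dichotomy $\langle u,x\rangle=0$ are both fine, but they are the easy parts.

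For contrast, the paper avoids row-by-row exposure entirely. It works over $\mathbb{Z}_q$ with $q=\Theta(n^{1/2}/\log^{C}n)$ prime, sets $K=|\ker_{\mathbb{Z}_q}(M_n)|$, and shows $\mathbb{E}[K]\leq 2+o(1)$; since $K$ is a power of $q$ and singularity over $\mathbb{Z}_q$ means $K\geq q$, Markov's inequality gives $p(n)\leq p'(n)=\Pr[K\geq q]\leq (2+o(1))/q$, which is the stated bound. The expectation is computed by splitting $\boldsymbol{a}\in\mathbb{Z}_q^n$ into ``almost-constant'' vectors (handled by a trivial $2^{-n}$ row-exposure bound times a count) and the rest (handled by a Fourier computation that reduces to counting pairs $(i,j)$ with $\ell_i a_j+\ell_j a_i\neq 0\bmod q$, itself bounded by an elementary Mantel-type argument). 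This achieves your stated goal --- no quadratic Littlewood--Offord anywhere --- but by replacing the exposure scheme with a first-moment computation over a finite field, not by reworking the CTV decomposition. If you want to pursue your route, the place to focus is a proof, without quadratic anticoncentration, that $\Pr[\operatorname{corank}(M_{k+1})>\operatorname{corank}(M_k)\mid M_k]$ is small when $\operatorname{corank}(M_k)=0$; absent that, the proposal does not yield Theorem~\ref{main}.
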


We did not try to improve the bound in Theorem \ref{main} as we wanted to keep the proof short and simple. It is plausible that with some ideas from \cite{ferber2019singularity} one could significantly improve this bound, but in order to obtain an exponential bound it seems like one needs to come up with new ideas. 

The proof is based on ideas from \cite{FJLS2018, huang2018invertibility}, but the details are much simpler.  

\section{Auxiliary lemmas}

Let $q$ be some prime number, let $\boldsymbol{a}\in \mathbb{Z}_q^n$, and let $r\in \mathbb{Z}_q$. The $r$th \emph{level set of} $\boldsymbol{a}$ is 
$$L_r(\boldsymbol{a})=\{i\in [n] \mid a_i=r\}.$$  
For convenience, we will use the notation $L_{\neq r}:=[n]\setminus L_r,$ and we also set $m(\boldsymbol{a})$ to be the size of the largest level set. 

Finally, we let
$$\mathcal L:=\{\boldsymbol{a}\in \mathbb{Z}_q^n \mid m(\boldsymbol{a})\geq n-n/\log^2n\}$$
be the set of all $\boldsymbol{a}\in \mathbb{Z}_q^n$ with some level set of size larger than $n-\frac{n}{\log^2n}$.

Now we are ready to state our auxiliary lemmas. First, let us make the following simple (but yet, useful) observation:

\begin{observation} \label{obs: trivial bound}
Let $\boldsymbol{a}\in \mathbb{Z}_q^n\setminus \{\boldsymbol{0}\}$. Then, 
$$\Pr[M_n\cdot \boldsymbol{a}=\boldsymbol{0}]\leq 2^{-n}.$$
\end{observation}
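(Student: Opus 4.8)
The plan is to expose the entries of $M_n$ in a carefully chosen order and to bound, one at a time, the conditional probability that each row equation $\sum_{j} M_{ij} a_j = 0$ holds. Since $\boldsymbol{a} \neq \boldsymbol{0}$ and $q$ is prime, fix an index $k$ with $a_k \neq 0$; then $a_k$ is invertible in $\mathbb{Z}_q$. First I would reveal all entries of the principal submatrix $M'$ obtained by deleting row $k$ and column $k$ from $M_n$ — equivalently, all $M_{ij}$ with $i,j \neq k$. The still-unexposed independent entries are then exactly $M_{kk}$ together with the $n-1$ entries $M_{ik} = M_{ki}$ for $i \neq k$.

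Next I would handle the $n-1$ equations indexed by $i \neq k$. Rewriting the $i$-th equation as $M_{ik}\, a_k = -\sum_{j \neq k} M_{ij} a_j$ and using that the right-hand side is already determined by $M'$ while $a_k$ is invertible, the equation forces $M_{ik}$ to equal a single element of $\mathbb{Z}_q$; since $M_{ik} \in \{\pm 1\}$, this event has conditional probability at most $1/2$. Because the variables $\{M_{ik} : i \neq k\}$ are mutually independent and independent of $M'$, the conjunction of all $n-1$ such equations has conditional probability at most $2^{-(n-1)}$.

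Finally, the remaining equation is the $k$-th one, $M_{kk}\, a_k = -\sum_{j \neq k} M_{jk}\, a_j$. On the event that all the previous equations hold, every $M_{jk}$ with $j \neq k$ has already been pinned to a fixed value, so the right-hand side is determined; as $M_{kk}$ is uniform on $\{\pm 1\}$ and independent of everything exposed so far, this last equation holds with conditional probability at most $1/2$. Multiplying these conditional bounds along the chain rule gives $\Pr[M_n \boldsymbol{a} = \boldsymbol{0}] \leq 2^{-(n-1)} \cdot 2^{-1} = 2^{-n}$, as claimed.

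The only real subtlety — and the step I would be most careful about — is the bookkeeping of independence: one must check that after deleting row and column $k$, the leftover randomness splits cleanly into the off-diagonal entries $M_{ik}$ ($i \neq k$) and the single diagonal entry $M_{kk}$, and that conditioning on the first batch of $n-1$ equations does not disturb the distribution of $M_{kk}$. This is fine precisely because the $\binom{n+1}{2}$ entries of $M_n$ lying on and above the diagonal are i.i.d.\ uniform signs, so revealing some of them (and conditioning on events measurable with respect to those) leaves the remaining ones uniform and independent.
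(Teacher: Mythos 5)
Your proof is correct and follows exactly the same approach as the paper: fix an index with $a_k \neq 0$, expose all entries of $M_n$ outside row and column $k$, and observe that each of the $n$ equations then pins down one of the $n$ remaining independent $\pm 1$ entries to a single value, giving a factor of $1/2$ each. You have simply filled in the details that the paper leaves as ``straightforward.''
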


\begin{proof}
Indeed, let $1\leq j\leq n$ be some coordinate for which $a_j\neq 0\mod q$, and expose all the entries of $M_n$ but the entries in the $j$th row and column. It is now straightforward to see that we obtain the desired. 
\end{proof}

The following lemma is basically the key lemma for our proof. Roughly speaking, it asserts that if $\boldsymbol{a}\notin \mathcal{L}$, then $M_n\cdot \boldsymbol{a}$ is (more or less) equally likely to be any vector from $\mathbb{Z}_q^n$.

\begin{lemma} \label{lemma:key}
Let $q$ be a prime such that $q = O(n^{1/2}/\log^{C}n)$. Let $\boldsymbol{a}\notin \mathcal{L}$, and $\boldsymbol{v}\in \mathbb{Z}_q^n$. Then, 
$$\Pr[M_n\cdot \boldsymbol{a}=\boldsymbol{v}]=\frac{1+o(1)}{q^n}.$$ 
\end{lemma}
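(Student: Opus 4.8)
The plan is to run a Fourier-analytic (Esseen-type) argument over $\mathbb{Z}_q^n$. Writing $e_q(x):=e^{2\pi i x/q}$, Fourier inversion on $\mathbb{Z}_q^n$ gives
$$\Pr[M_n\boldsymbol a=\boldsymbol v]=\frac{1}{q^n}\sum_{\boldsymbol\xi\in\mathbb{Z}_q^n}e_q(-\boldsymbol\xi^T\boldsymbol v)\,\mathbb{E}\bigl[e_q(\boldsymbol\xi^TM_n\boldsymbol a)\bigr],$$
and since the $\boldsymbol\xi=\boldsymbol 0$ term equals $1/q^n$, it suffices to prove $\sum_{\boldsymbol\xi\neq\boldsymbol 0}\bigl|\mathbb{E}[e_q(\boldsymbol\xi^TM_n\boldsymbol a)]\bigr|=o(1)$, which yields the claim uniformly in $\boldsymbol v$. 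Using $m_{ij}=m_{ji}$ to write $\boldsymbol\xi^TM_n\boldsymbol a=\sum_i m_{ii}\xi_ia_i+\sum_{i<j}m_{ij}(\xi_ia_j+\xi_ja_i)$ and the independence of the entries on and above the diagonal, the expectation factors into a product of cosines; discarding the diagonal factors,
$$\bigl|\mathbb{E}[e_q(\boldsymbol\xi^TM_n\boldsymbol a)]\bigr|\le\prod_{i<j}\Bigl|\cos\Bigl(\tfrac{2\pi(\xi_ia_j+\xi_ja_i)}{q}\Bigr)\Bigr|.$$
Since $q$ is an odd prime (the only case needed), $|\cos(2\pi t/q)|\le e^{-c/q^2}$ for every $t\in\mathbb{Z}_q\setminus\{0\}$ and an absolute constant $c>0$; hence, calling a pair $\{i,j\}$ \emph{unbalanced} if $\xi_ia_j+\xi_ja_i\not\equiv 0\pmod q$ and writing $B(\boldsymbol\xi)$ for the number of unbalanced pairs, we get $\bigl|\mathbb{E}[e_q(\boldsymbol\xi^TM_n\boldsymbol a)]\bigr|\le e^{-cB(\boldsymbol\xi)/q^2}$. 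Note every pair disjoint from $\mathrm{supp}(\boldsymbol\xi)$ is balanced, so unbalanced pairs touch $\mathrm{supp}(\boldsymbol\xi)$.

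Everything thus reduces to a purely combinatorial bound: a good lower bound on $B(\boldsymbol\xi)$ in terms of $s:=|\mathrm{supp}(\boldsymbol\xi)|$, using $\boldsymbol a\notin\mathcal L$. Recall this means $|L_r(\boldsymbol a)|<n-\gamma n$ for every $r$, where $\gamma:=1/\log^2 n$, so in particular $|L_{\neq 0}(\boldsymbol a)|>\gamma n$. I will show $B(\boldsymbol\xi)\gtrsim\gamma n\cdot\min(s,\gamma n)$. For \emph{small support}, $s\le\gamma n/2$: for $i\in\mathrm{supp}(\boldsymbol\xi)$ and $j\notin\mathrm{supp}(\boldsymbol\xi)$ with $a_j\neq 0$ one has $\xi_ia_j+\xi_ja_i=\xi_ia_j\neq 0$, and there are at least $|L_{\neq 0}(\boldsymbol a)|-s\ge\gamma n/2$ such $j$; for distinct $i$ these pairs are distinct, so $B(\boldsymbol\xi)\ge s\gamma n/2$. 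For \emph{large support}, $s>\gamma n/2$, set $Z:=\mathrm{supp}(\boldsymbol\xi)\cap L_0(\boldsymbol a)$. If $|Z|\ge\gamma n/3$, every pair with one endpoint in $Z$ and the other in $L_{\neq 0}(\boldsymbol a)$ is unbalanced, so $B(\boldsymbol\xi)\ge|Z|\,|L_{\neq 0}(\boldsymbol a)|\gtrsim\gamma^2n^2$. Otherwise $T':=\mathrm{supp}(\boldsymbol\xi)\setminus Z$ has $|T'|>\gamma n/6$, and I partition $T'$ into \emph{proportionality classes} according to the value $a_i/\xi_i\in\mathbb{Z}_q\setminus\{0\}$ (well defined since $i\in T'$). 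Within one class, $\xi_ia_j+\xi_ja_i=2\mu\,\xi_i\xi_j\neq 0$ — here $q$ odd is used again — so a class of size $\gtrsim\gamma n$ already contributes $\gtrsim\gamma^2n^2$ unbalanced pairs. If every class is small ($\le\gamma n/12$, say), a short double-count finishes it: two coordinates $i,i'$ in different proportionality classes have at most $|L_0(\boldsymbol a)|<(1-\gamma)n$ common partners $j$ for which both $\{i,j\}$ and $\{i',j\}$ are balanced, so by inclusion–exclusion at least one of $i,i'$ is balanced with at most $(1-\gamma/2)n$ partners; hence the coordinates balanced with more than $(1-\gamma/2)n$ partners all lie in one (small) proportionality class, leaving $\gtrsim\gamma n$ coordinates of $T'$ each unbalanced with $\ge\gamma n/2$ partners, whence again $B(\boldsymbol\xi)\gtrsim\gamma^2n^2$.

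Finally, summing over $\boldsymbol\xi\neq\boldsymbol 0$ using that there are at most $\binom ns(q-1)^s\le(nq)^s$ choices with $|\mathrm{supp}(\boldsymbol\xi)|=s$: with $q=O(n^{1/2}/\log^{C}n)$ one has $\gamma n/q^2\gtrsim\log^{2C-2}n$, so the small-support part is at most $\sum_{s\ge1}\bigl(enq\cdot e^{-\Omega(\gamma n/q^2)}\bigr)^s=o(1)$, and the large-support part is at most $(2q)^n\,e^{-\Omega(\gamma^2n^2/q^2)}=o(1)$ since $\gamma^2n^2/q^2\gtrsim n\log^{2C-4}n\gg n\log n$; this is where $C$ must be taken large enough (any $C>5/2$ works). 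The main obstacle is the combinatorial lower bound on $B(\boldsymbol\xi)$ in the "middle" regime, where $\mathrm{supp}(\boldsymbol\xi)$ has size of order $\gamma n$ and $\boldsymbol a$ has only about $\gamma n$ nonzero coordinates: there neither the "partner outside the support" trick nor a crude count produces enough unbalanced pairs, and one genuinely needs the proportionality-class dichotomy above — the only place, besides the cosine estimate, where primality and oddness of $q$ enter.
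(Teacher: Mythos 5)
Your proposal is correct and follows the same Fourier-analytic skeleton as the paper: Fourier inversion over $\mathbb{Z}_q^n$, factoring $\mathbb{E}[e_q(\boldsymbol{\xi}^T M_n\boldsymbol{a})]$ into cosines using symmetry and the independence of the upper-triangular entries, the uniform cosine bound $e^{-\Omega(1/q^2)}$ for odd $q$, and a reduction to lower-bounding the number of pairs $\{i,j\}$ with $\xi_i a_j+\xi_j a_i\not\equiv 0$. The small-support case is handled identically to the paper's Proposition~\ref{proposition 1}, and the split $|Z|\ge\gamma n/3$ versus $|T'|>\gamma n/6$ mirrors the paper's Case~1/Case~2 split in Proposition~\ref{proposition 2}. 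Where you genuinely diverge is in the heart of the large-support case. The paper defines a graph on $T'=L_{\neq0}(\boldsymbol{a})\cap L_{\neq0}(\boldsymbol{\xi})$ with edges on \emph{balanced} pairs, shows it is triangle-free (by a three-way case analysis on level sets of $\boldsymbol{a}$), and invokes Mantel's theorem to conclude that at least $\Omega(|T'|^2)$ pairs are unbalanced. You instead partition $T'$ by the value $a_i/\xi_i$, observe that within a class every pair is automatically unbalanced (since $2\mu\xi_i\xi_j\neq 0$ because $q$ is odd), and then run a dichotomy: one large class gives $\Omega(\gamma^2n^2)$ pairs directly; if all classes are small, a double count shows all but a small subset of $T'$ have $\Omega(\gamma n)$ unbalanced partners. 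Your argument is self-contained (no Mantel), and it makes the role of odd $q$ more transparent. In fact the two arguments are closely related: in terms of $\mu_i:=\xi_i/a_i$ on $T'$, a pair is balanced iff $\mu_i=-\mu_j$, so the balanced graph is a union of complete bipartite graphs between opposite classes, and both triangle-freeness and your within-class observation follow immediately from this description. The quantitative bound you obtain for large support, $\Omega(\gamma^2n^2)$, is slightly weaker than the paper's $\Omega(\min\{s^2, s\gamma n\})$ when $s\gg\gamma n$, but this is harmless since the summation over those $\boldsymbol{\xi}$ is dominated by the $q^n$ envelope anyway; your estimate of $C>5/2$ is consistent.

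Two small points worth tidying up if you were to write this out in full: (i) in the double count you should explicitly exclude $j\in\{i,i'\}$ and absorb the resulting $O(1)$ slack into the $\gamma n/2$ threshold, and (ii) the set of high-balance coordinates lying ``in one proportionality class'' needs the observation that this holds pairwise and hence globally (transitivity of equality of $a_i/\xi_i$). Neither affects correctness.
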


Before proving Lemma \ref{lemma:key}, let us first state (and prove) two simple statements that will be used in the proof of the lemma. 

\begin{proposition} \label{proposition 1}
Let $\boldsymbol{a}\notin \mathcal{L}$ and let $\boldsymbol{\ell}\in \mathbb{Z}_q^n$ be a vector with support of size $s< \frac{n}{2\log^2n}$. Then, there are at least $\frac{sn}{2\log^2n}$ pairs $1\leq i<j\leq n$ for which $\ell_ia_j+\ell_ja_i\neq 0\mod q$. 
\end{proposition}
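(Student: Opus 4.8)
The plan is to disregard almost all pairs and to count only those $\{i,j\}$ for which exactly one of $\ell_i,\ell_j$ is nonzero. Write $S=\mathrm{supp}(\boldsymbol{\ell})$, so $|S|=s$. If $i\in S$ and $j\notin S$, then $\ell_j=0$, hence $\ell_ia_j+\ell_ja_i=\ell_ia_j$; since $q$ is prime and $\ell_i\neq 0\bmod q$, this quantity is nonzero modulo $q$ precisely when $a_j\neq 0\bmod q$. So it suffices to count, for each fixed $i\in S$, how many indices $j\notin S$ satisfy $a_j\neq 0\bmod q$.

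For this I would invoke the hypothesis $\boldsymbol{a}\notin\mathcal{L}$, applied to the level value $r=0$: it gives $|L_0(\boldsymbol{a})|\leq m(\boldsymbol{a})<n-n/\log^2n$, equivalently $|L_{\neq 0}(\boldsymbol{a})|>n/\log^2n$, i.e.\ $\boldsymbol{a}$ has more than $n/\log^2n$ nonzero coordinates. At most $s<\frac{n}{2\log^2n}$ of these coordinates can lie in $S$, so at least $\frac{n}{\log^2n}-s>\frac{n}{2\log^2n}$ of them lie outside $S$.

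Assembling the count: each of the $s$ indices $i\in S$ can be paired with each of these (at least) $\frac{n}{2\log^2n}$ indices $j\notin S$ with $a_j\neq 0\bmod q$, and because $S$ and $[n]\setminus S$ are disjoint, distinct choices of $(i,j)$ yield distinct unordered pairs $\{i,j\}$, i.e.\ distinct pairs $1\leq i<j\leq n$. By the reduction above each such pair satisfies $\ell_ia_j+\ell_ja_i\neq 0\bmod q$, which produces at least $s\cdot\frac{n}{2\log^2n}=\frac{sn}{2\log^2n}$ such pairs, as claimed.

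There is no substantial obstacle; the only point needing a touch of care is ensuring the pairs produced are genuinely distinct and not double counted, which is automatic since one endpoint always lies in $S$ and the other always outside it. It is worth recording that the argument uses $\boldsymbol{a}\notin\mathcal{L}$ only through its $r=0$ level set, and uses primality of $q$ only to conclude that a product of two nonzero residues is nonzero.
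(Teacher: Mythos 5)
Your proof is correct and follows essentially the same approach as the paper: your set $S=\mathrm{supp}(\boldsymbol{\ell})$ is the paper's $L_{\neq 0}(\boldsymbol{\ell})$, and the indices $j\notin S$ with $a_j\neq 0$ form exactly the paper's set $J=L_0(\boldsymbol{\ell})\cap L_{\neq 0}(\boldsymbol{a})$, with the same size bound obtained the same way. The only difference is presentational.
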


\begin{proof}
Since $\boldsymbol{a}\notin \mathcal L$, we have that $|L_{\neq 0}(\boldsymbol{a})|\geq \frac{n}{\log^2n}$ and therefore we have that $J:=L_0(\boldsymbol{\ell})\cap L_{\neq 0}(\boldsymbol{a})$ is of size at least $|L_{\neq 0}(\boldsymbol{a})|-s\geq \frac{n}{2\log^2n}$. Now, observe that for every $i\in L_{\neq 0}(\boldsymbol{\ell})$ and $j\in J$ we have that $\ell_ia_j+\ell_ja_i=\ell_ia_j\neq 0\mod q$. In particular, there are at least
$$|L_{\neq 0}(\boldsymbol{\ell})|\cdot |J|\geq \frac{sn}{2\log^2n}$$
such pairs. This completes the proof.
\end{proof}

\begin{proposition} \label{proposition 2}
Let $\boldsymbol{a}\notin \mathcal{L}$ and let $\boldsymbol{\ell}\in \mathbb{Z}_q^n$ be a vector with support of size $s\geq \frac{n}{2\log^2n}$. Then, there are at least $\min\{s^2/20, \frac{sn}{2\log^2n}\}$  pairs $1\leq i<j\leq n$ for which $\ell_ia_j+\ell_ja_i\neq 0\mod q$.
\end{proposition}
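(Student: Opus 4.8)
The plan is to split the count according to whether we can already find enough good pairs \emph{inside} the support of $\boldsymbol{\ell}$, or whether we must pair support coordinates against coordinates where $\boldsymbol{\ell}$ vanishes. Write $S:=\operatorname{supp}(\boldsymbol{\ell})$, so $|S|=s\geq n/(2\log^2n)$, and for $i\in S$ set $c_i:=a_i\ell_i^{-1}\in\mathbb{Z}_q$ (the inverse exists since $q$ is prime and $\ell_i\neq 0$). The key algebraic identity is that for $i,j\in S$ one has $\ell_ia_j+\ell_ja_i=\ell_i\ell_j(c_i+c_j)$, and since $\ell_i\ell_j\neq 0$ this vanishes modulo $q$ exactly when $c_i+c_j=0$; thus the bad pairs inside $S$ are precisely those with $c_j=-c_i$. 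Letting $n_v:=|\{i\in S:c_i=v\}|$ and $m_a:=\max_v n_v$ (attained at some $v^\ast$), each bad pair inside $S$ contributes to $n_vn_{-v}$ for some $v$, so a one-line double count bounds the number of such pairs by $\tfrac12\sum_v n_v^2\leq \tfrac12 m_a s$. Hence the number of good pairs inside $S$ is at least $\binom{s}{2}-\tfrac12 m_a s$.

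Next I would distinguish three cases. If $m_a\leq s/2$, the bound above already yields at least $\binom{s}{2}-s^2/4=s(s-2)/4\geq s^2/20$ good pairs (recall $s\to\infty$, so the lower-order term is harmless). If $m_a>s/2$ but $v^\ast\neq 0$, restrict to $S':=\{i\in S:c_i=v^\ast\}$, of size $|S'|=m_a>s/2$; for $i,j\in S'$ we have $c_i+c_j=2v^\ast\neq 0$ (using that $q$ is an odd prime), so \emph{all} $\binom{|S'|}{2}\geq s^2/20$ pairs inside $S'$ are good. The remaining case is $m_a>s/2$ with $v^\ast=0$, i.e.\ more than $s/2$ coordinates of $S$ lie in $L_0(\boldsymbol{a})$; here the inside-$S$ strategy fails, so I switch to the pairs $\{i,j\}$ with $i\in S':=S\cap L_0(\boldsymbol{a})$ and $j\in L_{\neq 0}(\boldsymbol{a})$. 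For such a pair $\ell_ia_j+\ell_ja_i=\ell_ia_j\neq 0$, and since $S'\subseteq L_0(\boldsymbol{a})$ is disjoint from $L_{\neq 0}(\boldsymbol{a})$ these unordered pairs are all distinct; using $\boldsymbol{a}\notin\mathcal{L}$, which forces $|L_{\neq 0}(\boldsymbol{a})|\geq n/\log^2n$, we obtain at least $|S'|\cdot|L_{\neq 0}(\boldsymbol{a})|\geq \tfrac{s}{2}\cdot\tfrac{n}{\log^2n}=\tfrac{sn}{2\log^2n}$ good pairs. In every case the count is at least $\min\{s^2/20,\ sn/(2\log^2n)\}$, which is what we want.

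The genuinely routine parts are the double-count bounding the bad pairs inside $S$ and the verification of the numerical constants in the $\binom{s}{2}$-type estimates, all of which carry plenty of slack once $s\geq n/(2\log^2n)$. The one point requiring care is the case split itself: one must observe that a dominant level value $v^\ast$ of $\boldsymbol{c}$ is harmless \emph{unless} it equals $0$, and that when $v^\ast=0$ the dominant part $S\cap L_0(\boldsymbol{a})$ is exactly the place where pairing against $L_{\neq 0}(\boldsymbol{a})$ is automatically productive — this is precisely why the bound degrades from an $s^2$ term to the $sn/\log^2n$ term in that regime, matching the stated minimum. (Implicitly we use $q\neq 2$; in the intended application $q$ is large, so this is not a real restriction.)
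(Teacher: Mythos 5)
Your proof is correct. Let me compare it with the paper's.

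Both arguments rest on the same algebraic core: for $i,j$ in the support $S$ of $\boldsymbol{\ell}$, the pair is bad exactly when $c_i+c_j=0$ where $c_i=a_i\ell_i^{-1}$, and the problematic value for $c$ is $0$ (since $0=-0$, while $2v\neq 0$ for $v\neq 0$, $q$ odd). The paper packages this as a graph-theoretic statement: on the vertex set $L_{\neq 0}(\boldsymbol a)\cap S$ (where $c_i\neq 0$), the graph of bad pairs is triangle-free, and Mantel's theorem gives the $s^2/20$ bound; when instead $L_0(\boldsymbol a)\cap S$ is large, it pairs those indices against $L_{\neq 0}(\boldsymbol a)$ to get the $sn/(2\log^2 n)$ bound. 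You avoid Mantel entirely: you double-count bad pairs via $\tfrac12\sum_v n_v n_{-v}\le\tfrac12 m_a s$, and organize the case split by the dominant level $v^\ast$ of $\boldsymbol c$ (no dominant class, dominant class $\neq 0$, dominant class $=0$), reducing to the $L_0$-vs-$L_{\neq 0}$ pairing only in the last case. This is the same pairing the paper uses in its Case~1; your Cases~1 and~2 together replace the paper's Mantel step with an elementary direct count. Net effect: your version is a bit longer (three cases rather than two) but is more self-contained and arguably more transparent about \emph{why} $v^\ast=0$ is the only genuinely bad scenario; the paper's version is more compact but outsources the combinatorics to a named theorem. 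Both implicitly require $q$ odd, which is harmless in the application. One tiny point worth making explicit: your bound ``bad pairs $\leq\tfrac12\sum_v n_v n_{-v}\leq\tfrac12\sum_v n_v^2$'' uses Cauchy--Schwarz (or rearrangement) for the middle inequality; it's true, just worth a word.
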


\begin{proof}
We split into two cases:

{\bf Case 1.} $|L_0(\boldsymbol{a})\cap L_{\neq 0}(\boldsymbol{\ell})|\geq s/2$. In this case, let $I:=L_0(\boldsymbol{a})\cap L_{\neq 0}(\boldsymbol{\ell})$ and $J=L_{\neq 0}(\boldsymbol{a})$. Clearly, for all $i\in I$ and $j\in J$ we have $\ell_ia_j+\ell_ja_i=\ell_ia_j\neq 0 \mod q$, and therefore, there are at least $|I|\cdot |J|\geq \frac{sn}{2\log^2n}$ such pairs. 

{\bf Case 2.} $|L_{\neq 0}(\boldsymbol{a})\cap L_{\neq 0}(\boldsymbol{\ell})|\geq s/2$. For all $r\in \mathbb{Z}_q$ we define $J_r:=L_r(\boldsymbol{a})\cap L_{\neq 0}(\boldsymbol{\ell})$, and observe that $s':=\sum_{r\neq 0}|J_r|\geq s/2$. Next, define an auxilairy graph $G$ on vertex set $V=L_{\neq 0}(\boldsymbol{a})\cap L_{\neq 0}(\boldsymbol{\ell})$, where two vertices $i,j\in V$ are connected by an edge if and only if $\ell_ia_j+\ell_ja_i=0 \mod q$. We show that $G$ is triangle free, and therefore, by Mantel's theorem we have $e(G)\leq \frac{1}{2}\cdot \binom{|V|}{2}$. In particular, it means that there are at least $\frac{1}{2}\cdot \binom{|V|}{2}\geq s^2/20$ pairs $i,j\in V$ for which $\ell_ia_j+\ell_ja_i\neq 0 \mod q$ as desired. 

To this end, let $i,j,k\in V$ be three distinct vertices. We distinguish between three cases:

{\bf Case 2.1} $i,j,k\in J_r$ for some $r\neq 0$. Observe that 
$\ell_ia_j+\ell_ja_i=r(\ell_i+\ell_j)$, and therefore, if it equals $0\mod q$, then we have $\ell_i=-\ell_j$. Now, without loss of generality we can assume that $\ell_k\neq -\ell_j$ (the case $\ell_k\neq \ell_j$ is treated similarly). Then, 
$$\ell_ka_j+\ell_ja_k=r(\ell_k+\ell_j)\neq 0\mod q.$$

{\bf Case 2.2} $i,j\in J_{r_1}$ and $k\in J_{r_2}$ for some $r_1\neq r_2$ and both are not  $0\mod q$. If $\ell_ia_j+\ell_ja_i=r_1(\ell_i+\ell_j)\neq 0 \mod q$ then we are done. Otherwise, we have that $\ell_i=-\ell_j$. Now, consider the expressions $$\ell_ia_k+\ell_ka_i=\ell_ir_2+\ell_kr_1, \textrm{ and }\ell_ja_k+\ell_ka_j=-\ell_ir_2+\ell_kr_1.$$ 
Clearly, at least one of them is not $0\mod q$.

{\bf Case 2.3} $i\in J_{r_1}$, $j\in J_{r_2}$ and $k\in J_{r_3}$ for some distinct $r_1,r_2,$ and  $r_3$, all are not $0\mod q$. Suppose that we have
$$\ell_ia_j+\ell_ja_i=\ell_ia_k+\ell_ka_i=0\mod q.$$
(if not, then we are done). 

 In particular, it means that 
 $$\ell_i=\frac{-\ell_ja_i}{a_j}=\frac{-\ell_jr_1}{r_2},$$
and that 
$$\ell_i=\frac{-\ell_kr_1}{r_3}.$$ 

These two identities yield that 
$$0\mod q=\ell_kr_2-\ell_jr_3=\ell_ka_j-\ell_ja_k,$$ 
and in particular, since $\ell_ja_k\neq 0\mod q$, we have that 
$$\ell_ka_j+\ell_ja_k\neq 0\mod q$$
as desired. This completes the proof. 
\end{proof}

Now we are ready to prove Lemma \ref{lemma:key}. 

\begin{proof}
Let $\boldsymbol{a}\notin \mathcal L$, let $\boldsymbol{v}\in \mathbb{Z}_q^n$, and let $e_q(x)=e^{\frac{2\pi i x}{q}}$. Recalling that $m_{ij} = m_{ji}$, observe that
    \begin{align*}
    \Pr[M\cdot \boldsymbol{a}=v]&=\mathbb{E}[\delta_{0}(M\cdot \boldsymbol{a}-v)]\\
    &=\frac{1}{q^n}\sum_{\boldsymbol{\ell}\in \mathbb{Z}_q^n}\mathbb{E}[e_q(\boldsymbol{\ell}^T(M_n\boldsymbol{a}-v))]\\
    &=\frac{1}{q^n}\sum_{\boldsymbol{\ell}\in \mathbb{Z}_q^n}e_q(-\boldsymbol{\ell}^Tv)\cdot \mathbb{E}[e_q(\sum_{i,j}m_{ij}\ell_ia_j)]\\
    &=\frac{1}{q^n}\sum_{\boldsymbol{\ell}\in \mathbb{Z}_q^n}e_q(-\boldsymbol{\ell}^Tv)\prod_{1\leq i<j\leq n}\mathbb{E}[e_q(m_{ij}(\ell_ia_j+\ell_ja_i))]\prod_{i=1}^n\mathbb{E}[e_q(m_{ii}\ell_ia_i)]\\
    &=\frac{1}{q^n}+\frac{1}{q^n}\sum_{\boldsymbol{\ell}\neq \boldsymbol{0}\in \mathbb{Z}_q^n}e_q(-\boldsymbol{\ell}^Tv)\prod_{1\leq i<j\leq n}\cos\left(\frac{2\pi}{q}(\ell_ia_j+\ell_ja_i)\right)\prod_{i=1}^n\cos\left(\frac{2\pi \ell_ia_i}{q}\right).
\end{align*}

This implies that 

\begin{align*}
    \left|\Pr[M\cdot \boldsymbol{a}=v]-\frac{1}{q^n}\right|&\leq \frac{1}{q^n}\sum_{\boldsymbol{\ell}\neq \boldsymbol{0}\in \mathbb{Z}_q^n}\prod_{1\leq i<j\leq n}\left|\cos\left(\frac{2\pi}{q}(\ell_ia_j+\ell_ja_i)\right)\right|.
    \end{align*}
    
Therefore, it is enough to show that 

$$Error:=\sum_{\boldsymbol{\ell}\neq \boldsymbol{0}\in \mathbb{Z}_q^n}\prod_{1\leq i<j\leq n}\left|\cos\left(\frac{2\pi}{q}(\ell_ia_j+\ell_ja_i)\right)\right|=o(1).$$

Using the following simple estimate 
$$|\cos \frac{\pi m}{q}|\leq e^{-\frac{2}{q^2}},$$
which holds for all $m\neq 0 \mod q,$ we can upper bound

$$Error\leq  \sum_{\boldsymbol{\ell}\in \mathbb{Z}_q^n\setminus \{\boldsymbol{0}\}}e^{-2\cdot N(\boldsymbol{\ell},\boldsymbol{a})/q^2},$$
where $N(\boldsymbol{\ell},\boldsymbol{a})=\left|\{ (i,j)\in [n]^2 : \ell_ia_j+\ell_ja_i\neq 0\mod q\}\right|.$

Finally, to complete the proof, we split the above sum according to the size of the support of $\boldsymbol{\ell}$, and using Propositions \ref{proposition 1} and \ref{proposition 2} we obtain that 

$$Error\leq \sum_{s=1}^{n}\binom{n}{s}q^se^{-sn/q^2\log^2n}+\sum_{s=n/\log^2n}^{n}\binom{n}{s}(q-1)^se^{-s^2/20q^2},$$
which can be easily seen to be $o(1)$ as long as $q=O(n^{1/2}/\log^Cn)$. This completes the proof. 
\end{proof}

\section{Proof of Theorem \ref{main}}

We work over $\mathbb{Z}_q$, where $q=\Theta\left(\frac{n^{1/2}}{\log^C n}\right)$ is some prime, and observe that 
$$p(n)\leq \Pr[M_n \textrm{ is singular over }\mathbb{Z}_q]:=p'(n).$$

Now, define the random variable  
$$K=|Ker_{\mathbb{Z}_q}(M_n)|$$ 
and observe that 
$$\mathbb{E}[K]=\sum_{\boldsymbol{a}\in \mathbb{Z}_q^n}\Pr[M\cdot \boldsymbol{a}=0].$$

Our goal is to show that $\mathbb{E}[K]\leq 2+o(1)$, and then, by Markov's inequality we obtain that 
$$p'(n)=\Pr[K\geq q]\leq (2+o(1))/q$$ as desired.

To this end, let us split the above according to whether $\boldsymbol{a}$ is in $\mathcal L$ or not (recall that $\mathcal L$ is the set of all vectors $\boldsymbol{a}\in \mathbb{Z}_q^n$ with a level set of size at least $n-n/\log^2n$), and by  Observation \ref{obs: trivial bound} and Lemma \ref{lemma:key} we obtain that
    \begin{align*}
        \mathbb{E}[K]&=\sum_{\boldsymbol{a}\notin \mathcal L}\Pr[M\cdot \boldsymbol{a}=0]+\sum_{\boldsymbol{a}\in \mathcal L}\Pr[M\cdot \boldsymbol{a}=0]\\
        &\leq 1+\frac{1+o(1)}{q^n}\cdot q^n+\binom{n}{n/\log^2n}q^{n/\log^2n+1}2^{-n}=2+o(1).
    \end{align*}
    This completes the proof.

{\bf Acknowledgement} The author is grateful to Vishesh Jain and Van Vu for helpful comments on the manuscript. The author is also grateful to Jozsi Balogh for suggesting some simplifications and improvements for Propositions \ref{proposition 1} and \ref{proposition 2}. 

\bibliographystyle{abbrv}
\bibliography{symmetric}
\end{document}